\newtheorem{thm}{Theorem}
\newtheorem{lemma}{Lemma}
\newtheorem{prop}{Proposition}
\newtheorem{ex}{Example}
\newtheorem{qu}{Question}
\begin{document}

\title{Infinity harmonic functions over exterior domains}

\author{Guanghao Hong $\cdot$
 Yizhen Zhao}

\address{School of Mathematics and Statistics, Xi'an Jiaotong University, Xi'an, P.R.China 710049.}

\email{ghhongmath@xjtu.edu.cn} \email{shadow19971202@163.com}

\begin{abstract}
In this paper, we study the infinity harmonic functions with linear growth rate at infinity defined on exterior domains. We show that such functions must be asymptotic to planes or cones at infinity. We also establish the solvability of Dirichlet problems for exterior domains.
\end{abstract}

\subjclass[2010]{Primary 35J15, 35J60, 35J70; Secondary 49N60}

\keywords{infinity harmonic function, exterior domain, asymptotical behavior, Dirichlet problem}

\maketitle

\section{Introduction}

Let $\Omega \subset \mathbb{R}^n$ be an open set, an infinity harmonic function (IHF) $u\in C(\Omega)$ is a viscosity solution of the infinity Laplace equation
$$\triangle_{\infty}u:=\sum_{i,j}u_{x_i}u_{x_j}u_{x_ix_j}=0.$$
An extremely important characteristic property of infinity harmonic functions is the \textit{comparison with cones property (CCP)}: $\forall$ $V\subset\subset\Omega$ and $c(x)=a|x-x_0|+b$
\begin{eqnarray*}
c(x)\geq u(x) \ \mbox{on}\ \partial(V\backslash\{x_0\}) & \Rightarrow & c(x)\geq u(x) \ \mbox{in}\ V;\\
c(x)\leq u(x) \ \mbox{on}\ \partial(V\backslash\{x_0\}) & \Rightarrow & c(x)\leq u(x) \ \mbox{in}\ V.
\end{eqnarray*}
We refer the readers to [ACJ04][C08][W09][L16] for comprehensive expositions of this topic.

In this paper, we focus on the infinity harmonic functions over exterior domains. Let $A\subset \mathbb{R}^n$ be a bounded closed set. For simplicity, we assume $0\in A\subset B_1$. Let $\Omega:=A^c=\mathbb{R}^n\backslash A$ and $u\in C(\bar{\Omega)}$ be an IHF satisfying $\limsup\limits_{x\rightarrow\infty}\frac{|u(x)|}{|x|}<+\infty$. Denote $m^+:=\max\limits_{\partial\Omega}u$ and $m^-:=\min\limits_{\partial\Omega}u$. For $r>1$, we define
\begin{eqnarray*}
S^+_r:=\frac{\max(\max\limits_{\partial B_r}u,m^+)-m^+}{r} \ \ \mbox{and}\ \ S^-_r:=\frac{m^--\min(\min\limits_{\partial B_r}u,m^-)}{r}.
\end{eqnarray*}
By \textit{CCP}, both $S^+_r$ and $S^-_r$ are nondecreasing with respect to $r$. We define $S^{\pm}_{\infty}:=\lim\limits_{r\rightarrow\infty}S^{\pm}_r$ and $S_{\infty}:=\max(S^+_{\infty},S^-_{\infty})$. It is not difficult to see that $$S_{\infty}=\limsup_{x\rightarrow\infty}\frac{|u(x)|}{|x|}<+\infty$$ and
\begin{eqnarray*}
m^--S^-_{\infty}|x| \leq u(x)\leq m^++S^+_{\infty}|x| \ \ \mbox{in}\ \Omega.
\end{eqnarray*}

The main results of this paper are the following two theorems. The first one give a description of the asymptotic behavior at infinity of an exterior IHF with linear growth rate. The second one says that given the prescribed asymptotic behaviors there exist such IHFs.
\begin{thm}
Let $u$ be as above, then exact one of the four situations happens.

(i) $S_{\infty}=0$, $m^-\leq u(x)\leq m^+$ in $\Omega$;

(ii) $S^-_{\infty}<S^+_{\infty}$, $u(x)-S^+_{\infty}|x|$ attains its maximum and minimum on $\partial \Omega$;

(iii) $S^-_{\infty}>S^+_{\infty}$, $u(x)+S^-_{\infty}|x|$ attains its maximum and minimum on $\partial \Omega$;

(iv) $S^-_{\infty}=S^+_{\infty}>0$, there exists $a\in \mathbb{R}^n$ with $|a|=S_{\infty}$ such that
\begin{eqnarray*}
 u(x)=a\cdot x +o(|x|) \ \ \mbox{as}\ x\rightarrow\infty.
\end{eqnarray*}
\end{thm}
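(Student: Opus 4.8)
The four alternatives are mutually exclusive and exhaustive, since $S^+_\infty,S^-_\infty\ge0$ and the four cases are precisely the orderings of these two numbers together with the sign of their common value. Case (i) is immediate: $S_\infty=0$ forces $S^+_\infty=S^-_\infty=0$, whereupon the two-sided bound $m^--S^-_\infty|x|\le u\le m^++S^+_\infty|x|$ collapses to $m^-\le u\le m^+$. The substitution $u\mapsto-u$ sends $m^\pm\mapsto-m^\mp$ and $S^\pm_r\mapsto S^\mp_r$, hence exchanges the hypotheses and the conclusions of (ii) and (iii); so it suffices to prove (ii) and (iv), after which (iii) follows by reflection.

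The basic device is a one-sided comparison on annuli. Write $v:=u-S^+_\infty|x|$ and fix $R>1$. If a constant $K$ dominates $v$ on both $\partial\Omega$ and $\partial B_R$, then the cone $c(x)=K+S^+_\infty|x|$, whose vertex $0$ lies in $A$ and which is therefore infinity harmonic on $\Omega$, satisfies $c\ge u$ on all of $\partial(\Omega\cap B_R)=\partial\Omega\cup\partial B_R$; the comparison principle (equivalently \textit{CCP}) then gives $c\ge u$ inside, i.e.
\begin{equation*}
\sup_{\Omega\cap B_R}v=\max\Big(\max_{\partial\Omega}v,\ \max_{\partial B_R}v\Big),\qquad \max_{\partial B_R}v=m^+-(S^+_\infty-S^+_R)R,
\end{equation*}
where the second identity holds for all large $R$. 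Since the left-hand side increases to $\sup_\Omega v$ as $R\to\infty$, we obtain $\sup_\Omega v=\max\big(\max_{\partial\Omega}v,\ L^+\big)$ with $L^+:=\limsup_{R\to\infty}\big(m^+-(S^+_\infty-S^+_R)R\big)\le m^+$. Thus the whole of case (ii) reduces to the single inequality $L^+\le\max_{\partial\Omega}v$: the supremum of $u-S^+_\infty|x|$ is attained on $\partial\Omega$ exactly when it is not ``carried to infinity'' along the upper cone. (The paired statement for $u+S^-_\infty|x|$ is the reflected assertion.)

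For (iv) I would read off the asymptotic plane by blow-down. Put $u_R(x):=u(Rx)/R$ on $R^{-1}\Omega$. The monotone-slope (Lipschitz) estimates for infinity harmonic functions, together with the cone bounds, give uniform $C^{0,1}$ control on compact subsets of $\mathbb{R}^n\setminus\{0\}$, so a subsequence converges locally uniformly to an entire infinity harmonic $u_\infty$ (the origin is a removable singularity since $|u_\infty|\le S_\infty|x|$) with $u_\infty(0)=0$ and $-S_\infty|x|\le u_\infty\le S_\infty|x|$. The slopes survive the limit: $\max_{\partial B_1}u_\infty=\lim_R R^{-1}\max_{\partial B_R}u=S^+_\infty=S_\infty$ and $\min_{\partial B_1}u_\infty=-S_\infty$. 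Now the monotonicity of $\rho\mapsto\max_{\partial B_\rho(z)}\frac{u_\infty-u_\infty(z)}\rho$, letting $\rho\to\infty$, shows $u_\infty$ is globally $S_\infty$-Lipschitz. Choosing $p,q\in\partial B_1$ with $u_\infty(p)=S_\infty$ and $u_\infty(q)=-S_\infty$, the equality $u_\infty(p)-u_\infty(q)=2S_\infty\le S_\infty|p-q|\le2S_\infty$ forces $q=-p$; hence $u_\infty$ attains its maximal slope along the whole line $\mathbb{R}p$. Comparing with the cones $S_\infty t+S_\infty|x-tp|$ based at the points $tp$ and letting $t\to\mp\infty$ pins $u_\infty(x)=S_\infty\,p\cdot x=:a\cdot x$ with $|a|=S_\infty$. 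A final argument that the direction $p$ does not depend on the subsequence (two distinct limiting planes would both have to be saturated at one large radius, contradicting the linearity just proved) upgrades the locally uniform convergence $u_R\to a\cdot x$ to $u(x)=a\cdot x+o(|x|)$.

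It remains to exclude, in case (ii), the escape $L^+>\max_{\partial\Omega}v$. Such an escape would produce points $x_k\to\infty$ with $x_k/|x_k|\to\xi$ and $u(x_k)=S^+_\infty|x_k|+L^++o(1)$, so the blow-down limit $u_\infty$ above would touch its majorising cone, $u_\infty(\xi)=S^+_\infty=\max_{\partial B_1}u_\infty$, saturating the positive $\xi$-ray. The goal is to show this already forces $u_\infty=S^+_\infty\,\xi\cdot x$, whence $\min_{\partial B_1}u_\infty=-S^+_\infty$ and therefore $S^-_\infty\ge S^+_\infty$, contradicting the hypothesis $S^-_\infty<S^+_\infty$ of case (ii); then $L^+\le\max_{\partial\Omega}v$ and the maximum of $u-S^+_\infty|x|$ is attained on $\partial\Omega$, with (iii) following by the reflection $u\mapsto-u$. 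The genuinely hard point, and the crux that separates (ii) from (iv), is exactly this rigidity: whereas in (iv) saturation of both cones gives the antipodal points $q=-p$ and hence full-line saturation ``for free,'' in (ii) only one ray is known to be saturated, and the comparison-with-cones computation along a single ray merely reproduces the bound $u_\infty\le S^+_\infty|x|$. I therefore expect the main obstacle to be a sharpened equality analysis for the infinity Laplacian, showing that an entire solution trapped between $\pm S_\infty|x|$ and touching the upper cone along one ray must be the corresponding plane, carried out through the structure of maximal-slope lines rather than through any linear or unique-continuation machinery unavailable here.
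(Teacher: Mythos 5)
Your proposal has two genuine gaps, and both sit exactly at the load-bearing points. In case (ii), your reduction to excluding the escape $L^+>\max_{\partial\Omega}v$ addresses only the maximum; the minimum assertion of (ii) --- $u\ge c^-+S^+_\infty|x|$, a \emph{lower} bound by a cone of the larger slope $S^+_\infty$, which a priori you only know with slope $-S^-_\infty$ --- is not ``the reflected assertion'' (the reflection $u\mapsto-u$ produces case (iii), a statement about $u+S^-_\infty|x|$), and your proposal never proves it. Worse, the rigidity lemma you defer to is false as stated: an IHF on $\mathbb{R}^n\setminus\{0\}$ trapped between $\pm S_\infty|x|$ and touching the upper cone along a ray need not be a plane. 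The blow-down of $u(x)=|x|$ --- a genuine case-(ii) function, with $S^-_\infty=0$ --- is the cone $|x|$ itself, and the singular solution $U$ of [SWY08] discussed in Section 5 (after $x_n\mapsto-x_n$ and a sign flip) touches $|x|$ along the positive $x_n$-ray while being neither a plane nor a cone. The hypothesis you dropped is the asymmetric one: since the blow-down satisfies $V\ge-\lambda|x|$ with $\lambda=S^-_\infty<S^+_\infty=1$, the paper applies the rigidity theorem of [SWY08] (Theorem 3) to $-V$, which needs precisely the strict one-sided bound $-V\le M+(1-\epsilon)|x|$, and concludes $V(x)=|x|$ --- a cone, not a plane. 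Both halves of (ii) then follow by sandwiching $u$ between the cones $c^-+(1-\epsilon)|x|$ and $c^++(1+\epsilon)|x|$ on $\partial\Omega\cup\partial B_{r_k}$ and using \textit{CCP}; no contradiction argument is available, and indeed your plan of contradicting $S^-_\infty<S^+_\infty$ cannot work in the main sub-case $S^-_\infty=0$, where case (ii) genuinely occurs with conical behavior.

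In case (iv), your blow-down analysis matches the paper's first step, except that the jump from antipodal touching points at radius $1$ to saturation of the whole line $\mathbb{R}p$ needs the per-radius points $\pm e_{[R]}$ together with the cross-radius computation $\tilde R+R=V(\tilde Re_{[\tilde R]})-V(-Re_{[R]})\le|\tilde Re_{[\tilde R]}+Re_{[R]}|\le\tilde R+R$ forcing $e_{[\tilde R]}=e_{[R]}$ (an easy fix). The real gap is your final parenthetical on independence of the subsequence: ``two distinct limiting planes would both have to be saturated at one large radius'' is not an argument, because distinct subsequences yield distinct limit functions and nothing places both saturations inside a single configuration --- a priori the direction of the maximum on $\partial B_r$ could rotate slowly with $r$. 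This uniqueness of the blow-down is exactly what the paper identifies as the hard step, and its mechanism is absent from your proposal: solve Dirichlet problems $w_k$ in $B_k$ with data $u$ on $\partial B_k$, shift by $\max_{\partial\Omega}(w_k-u)$ (or symmetrically by the min) so that $\tilde w_{k_j}\le u$ in $B_{k_j}\cap\Omega$ with a touching point on $\partial\Omega$, extract an entire IHF $W\le u$ with $Lip(W,\mathbb{R}^n)\le1$, invoke the entire-space theorem (Theorem 4: $W=a\cdot x+o(|x|)$ with $|a|=Lip(W,\mathbb{R}^n)$), and then every blow-down $e\cdot x\ge a\cdot x$ forces $e=a$. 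Without this or a substitute, $u=a\cdot x+o(|x|)$ is not established. (A minor further error: ``$|u_\infty|\le S_\infty|x|$ makes the origin removable'' is false in general --- $V=|x|$ satisfies the bound with a non-removable singularity --- but removability is not needed, since Lemma 1 is a purely Lipschitz statement requiring no equation at the origin.)
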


 The proofs of (ii) and (iii) of Theorem 1 rely on a key result in [SWY08]. The proof of (iv) is the real contribution of this paper. The proof is divided into two steps. The first step is to show the blow downs of $u$ are linear functions. This can be done in the same way of proving blow ups are linear. The method is standard now. The second step is to show the uniqueness of the blow downs. This was a challenging task for us. In [HZ18], we verified the uniqueness of the blow downs for the entire IHFs with linear growth rate by a similar argument from [ES11]. However, this argument cannot be carried to the case of exterior IHFs. The solution we finally found was that we can manage to place an entire IHF either below or above the exterior IHF. This implies the uniqueness of the blow downs for the exterior IHF $u$ clearly. We also used this idea in [HY18].

\begin{thm}
Given any $g\in C(\partial \Omega)$, we have the following.

(i) For any $\lambda\in \mathbb{R}$, there exists an IHF $u\in C(\bar{\Omega})$ satisfying $u|_{\partial \Omega}=g$ and $u(x)-\lambda|x|$ attains its maximum and minimum on $\partial \Omega$. In the case of $\lambda=0$ such $u$ is unique.

(ii) For any $a\in \mathbb{R}^n$ with $|a|>0$, there exists an IHF $u\in C(\bar{\Omega})$ satisfying $u|_{\partial \Omega}=g$ and
$u(x)-a\cdot x$ attains its maximum and minimum on $\partial \Omega$.
\end{thm}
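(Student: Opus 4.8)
The plan is to construct the desired solutions by exhausting $\Omega$ with the bounded annular domains $\Omega_R:=B_R\setminus A$ $(R>1)$, solving a Dirichlet problem on each $\Omega_R$, and passing to the limit $R\to\infty$. On each $\Omega_R$ the Dirichlet problem for $\triangle_\infty$ with continuous boundary data has a unique viscosity solution, so I first fix boundary data that forces the prescribed asymptotics. For part (i) I prescribe $u_R=g$ on $\partial\Omega$ and $u_R(x)=\lambda R+c$ on $\partial B_R$, where $c$ is any constant in the band $[\,m_\lambda^-,m_\lambda^+\,]$ with $m_\lambda^\pm:=\min/\max_{\partial\Omega}(g-\lambda|\cdot|)$ (min for $-$, max for $+$); for part (ii) I prescribe $u_R=g$ on $\partial\Omega$ and $u_R(x)=a\cdot x+c$ on $\partial B_R$ with $c\in[\,\min_{\partial\Omega}(g-a\cdot y),\max_{\partial\Omega}(g-a\cdot y)\,]$.

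The role of the band is to trap $u_R$ between two explicit $\infty$-harmonic comparison functions. In case (i) the radial cones $C^\pm(x)=\lambda|x|+m_\lambda^\pm$ have their vertex at $0\in A$, hence are $\infty$-harmonic on all of $\Omega_R$; by the choice of $c$ one checks $C^-\le u_R\le C^+$ on $\partial\Omega_R=\partial\Omega\cup\partial B_R$, so by \textit{CCP} the same holds throughout $\Omega_R$. In case (ii) the affine functions $P^\pm(x)=a\cdot x+\max/\min_{\partial\Omega}(g-a\cdot y)$ are $\infty$-harmonic everywhere and play the same role. These bounds are independent of $R$, so $\{u_R\}$ is uniformly bounded on each fixed compact subset of $\bar\Omega$. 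Since $\infty$-harmonic functions are locally Lipschitz with a bound depending only on the local oscillation, $\{u_R\}$ is locally equi-Lipschitz in the interior; by Arzel\`a--Ascoli and a diagonal argument some subsequence converges locally uniformly to a function $u$, which is $\infty$-harmonic by the stability of viscosity solutions under local uniform limits. Passing the trapping inequalities to the limit gives $C^-\le u\le C^+$ (resp. $P^-\le u\le P^+$), and since equality on $\partial\Omega$ forces the extremal value, it follows at once that $u(x)-\lambda|x|$ (resp. $u(x)-a\cdot x$) attains its maximum and minimum on $\partial\Omega$.

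What still has to be secured is the boundary condition $u|_{\partial\Omega}=g$, and this is the step I expect to be the main obstacle. The difficulty is not solvability on a single $\Omega_R$ but the uniformity in $R$: I must produce a modulus of continuity at $\partial\Omega$ independent of $R$, so that $u_R=g$ on $\partial\Omega$ survives the limit. I plan to do this by building, at each $y_0\in\partial\Omega$, local cone barriers of the form $g(y_0)\pm\big(\omega_g(\delta)+K|x-y_0|\big)$ from the modulus of continuity $\omega_g$ of $g$ and the uniform $R$-independent bound on $u_R$, and comparing them with $u_R$ near $y_0$ via \textit{CCP}; since these barriers use only the fixed data $g$ and the uniform bound, they yield the required equicontinuity at $\partial\Omega$.

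Finally, for the uniqueness assertion in (i) when $\lambda=0$, any two solutions $u_1,u_2$ are bounded $\infty$-harmonic functions agreeing with $g$ on $\partial\Omega$; by Theorem 1(i) each satisfies $m^-\le u_i\le m^+$. Uniqueness then follows from the comparison principle for bounded $\infty$-sub/superharmonic functions on exterior domains (a Phragm\'en--Lindel\"of phenomenon in which the influence of $\partial B_R$ decays as $R\to\infty$): applying it with the subsolution $u_1$ and the supersolution $u_2$ gives $u_1\le u_2$, and the reverse follows by symmetry. No such rigidity is available once $\lambda\neq0$, since a vertical translate of the outer cone/plane data can be absorbed, which is why uniqueness is claimed only in the $\lambda=0$ case.
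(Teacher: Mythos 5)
Your proposal follows essentially the same route as the paper: exhaust $\Omega$ by the bounded domains $B_k\cap\Omega$, prescribe cone data $\lambda|x|+c$ (resp.\ affine data $a\cdot x+c$) on the outer sphere, trap the approximate solutions between the comparison cones $c^{\pm}+\lambda|x|$ (resp.\ planes $a\cdot x+c^{\pm}$) via \textit{CCP} so the bounds are independent of the exhaustion parameter, pass to a locally uniform limit by Arzel\`a--Ascoli and stability of viscosity solutions, and obtain uniqueness for $\lambda=0$ from the exterior comparison principle of Crandall--Gunnarsson--Wang (Theorem 5 in the paper). The only differences are cosmetic: you permit any constant $c$ in the band where the paper fixes $c=c^{+}$, and you explicitly supply the cone-barrier argument for equicontinuity of $u_k$ at $\partial\Omega$ (so that $u|_{\partial\Omega}=g$ survives the limit), a step the paper asserts without detail.
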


The paper is organized as follows. In Section 2, we state and prove some preliminary results. In Sections 3 and 4, we prove Theorems 1 and 2 separately. In section 5, we analyze an interesting counterexample from [SWY08] in order to show that given $g$ and $a$ the IHFs satisfying $u|_{\partial \Omega}=g$ and  $u(x)=a\cdot x +o(|x|)$ as $x\rightarrow\infty$ are not unique in general. Until now it is not clear for us whether the solutions in (i) (in the case of $\lambda\neq 0$ ) and (ii) of Theorem 2 are unique.

\section{Preliminaries}
\begin{prop}
Let $u$ be given as in Theorem 1, then $\lim\limits_{r\rightarrow\infty}Lip(u,B^c_r)=S_{\infty}$.
\end{prop}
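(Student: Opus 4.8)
The plan is to show that $\lim_{r\to\infty}\mathrm{Lip}(u,B^c_r)$ exists and equals $S_\infty$ by establishing the two bounds separately. Write $L_r:=\mathrm{Lip}(u,B^c_r)$. Since $B^c_{r'}\subset B^c_r$ for $r'>r$, the quantity $L_r$ is nonincreasing, so $L_\infty:=\lim_{r\to\infty}L_r=\inf_r L_r$ exists (finiteness will follow from the upper bound), and it remains to prove $L_\infty\ge S_\infty$ and $L_\infty\le S_\infty$.

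For the lower bound I would exhibit, for each fixed $r$, pairs of points in $B^c_r$ whose difference quotient is arbitrarily close to $S^+_\infty$ and to $S^-_\infty$. Fix $\rho_1=r+1$. For large $\rho_2$ pick $y_2\in\partial B_{\rho_2}$ with $u(y_2)=\max_{\partial B_{\rho_2}}u=m^++S^+_{\rho_2}\rho_2$ (valid once this maximum exceeds $m^+$), and let $y_1$ be the point where the ray $[0,y_2]$ meets $\partial B_{\rho_1}$. The global bound $u(y_1)\le m^++S^+_\infty\rho_1$ together with $S^+_{\rho_2}\to S^+_\infty$ gives $\frac{u(y_2)-u(y_1)}{|y_2-y_1|}\to S^+_\infty$ as $\rho_2\to\infty$. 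As $y_1,y_2$ and the segment joining them lie in $B^c_r$, this yields $L_r\ge S^+_\infty$; the mirror construction along minimizing directions, using $u\ge m^--S^-_\infty|x|$, gives $L_r\ge S^-_\infty$. Hence $L_\infty\ge S_\infty$.

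The upper bound is the heart of the matter, and a naive estimate is not sharp. For $x$ with $|x|=R$ let $\sigma^+(x),\sigma^-(x)$ be the upward and downward local slopes of $u$ at $x$, so the local Lipschitz constant is $\max(\sigma^+(x),\sigma^-(x))$. Merely trapping $u$ between the global cones $m^++S^+_\infty|x|$ and $m^--S^-_\infty|x|$ and invoking monotonicity of cone slopes on the largest admissible ball $B_{R-1}(x)$ yields only a bound of order $2S^+_\infty+S^-_\infty$, since that ball reaches radius $\approx 2R$, where the cones are loose. The remedy is to follow $u$ along a path rather than across one ball. Comparison with cones from above furnishes an ascent path $\gamma$ issuing from $x$ and staying in $\Omega$ until it meets $\partial\Omega$, along which $u$ increases with slope at least $\sigma^+(x)$; thus $u(\gamma(t))\ge u(x)+\sigma^+(x)t$ and $|\gamma(t)|\le R+t$. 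If $\gamma$ escapes to infinity, then $\frac{u(\gamma(t))}{|\gamma(t)|}\ge\frac{u(x)+\sigma^+(x)t}{R+t}\to\sigma^+(x)$ while the upper cone gives $\frac{u(\gamma(t))}{|\gamma(t)|}\le S^+_\infty+\frac{m^+}{|\gamma(t)|}\to S^+_\infty$, forcing $\sigma^+(x)\le S^+_\infty$. If instead $\gamma$ reaches some $p\in\partial\Omega\subset B_1$ after length $\ell\ge R-1$, then $m^+\ge u(p)\ge u(x)+\sigma^+(x)(R-1)$, whence $\sigma^+(x)\le\frac{m^+-u(x)}{R-1}\le\frac{m^+-m^-+S^-_\infty R}{R-1}\to S^-_\infty$. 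Either way $\sigma^+(x)\le S_\infty+o(1)$; applying the same reasoning to $-u$ (also infinity harmonic, with the roles of $S^+_\infty$ and $S^-_\infty$ exchanged) bounds $\sigma^-(x)$, so the local Lipschitz constant of $u$ at $x$ is at most $S_\infty+o(1)$ as $R\to\infty$.

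Finally I would transfer this pointwise bound to $L_r$. For an infinity harmonic function the Lipschitz constant in the length metric equals the supremum of its local slopes, so $L_r$ differs from $\sup_{|x|>r}\max(\sigma^+(x),\sigma^-(x))$ only through the cost of detouring Euclidean segments around the hole $B_r$; for point pairs at radius much larger than $r$ this cost is negligible, so letting $r\to\infty$ and invoking the previous step gives $L_\infty\le S_\infty$. Together with the lower bound this yields $L_\infty=S_\infty$. The main obstacle is exactly the sharpness of the upper bound: a single cone comparison loses a factor $2$, and circumventing this is what forces the use of the monotone ascent and descent paths together with the radial control $S^\pm_r\to S^\pm_\infty$.
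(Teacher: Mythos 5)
Your proposal is correct in substance but follows a genuinely different, and heavier, route than the paper. For the lower bound the paper simply uses the identity $S_{\infty}=\limsup_{x\rightarrow\infty}|u(x)|/|x|$ together with the triangle inequality through one fixed base point $x_0\in\partial B_{R_{\epsilon}}$ -- a two-line version of your extremal-pair construction; both work. For the upper bound the paper needs no local slopes, no ascent paths, and no length-metric transfer: given $y,z\in B^c_{R_{\epsilon}}$, it applies \textit{CCP} once on the annulus $B_R\backslash B_1$ with the cone of slope $S_{\infty}+\epsilon$ centered at $y$ itself. On $\partial B_R$ the cone condition holds because $|x-y|\approx |x|$ for fixed $y$ as $R\rightarrow\infty$, so the growth bound transfers with no loss; on $\partial B_1$ it holds by the global cone bounds once $R_{\epsilon}$ is chosen so that $\frac{S_{\infty}R+m^+-m^-}{R-1}\leq S_{\infty}+\epsilon$. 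This kills exactly the factor-$2$ loss you identified: that loss comes from comparing over balls centered at a third point, and placing the cone vertex at $y$ makes a single comparison sharp. Your increasing-slope-curve argument, with the dichotomy ``escapes to infinity'' versus ``terminates on $\partial\Omega$'', is a legitimate alternative and echoes the mechanism behind Theorem 1.3 of [SWY08], but it is substantially more machinery than the statement requires.

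Two of your steps need tightening if written out. In the escape case you should compare $u(x)+\sigma^+(x)t\leq m^++S^+_{\infty}(R+t)$ directly and let $t\rightarrow\infty$, rather than dividing by $|\gamma(t)|$: a priori you only know the path has infinite length, not that $|\gamma(t)|\rightarrow\infty$ (the latter then follows since $u$ is unbounded along the path while locally bounded); and when the path has finite length you must argue that it actually terminates at a point of $\partial\Omega$, where $u\in C(\bar{\Omega})$ applies. More seriously, the final transfer from the pointwise bound $\sigma^{\pm}(x)\leq S_{\infty}+o(1)$ to $Lip(u,B^c_{r'})$ is only gestured at: as stated, detouring around the hole costs a factor up to $\pi/2$ for pairs near $\partial B_{r'}$, which destroys sharpness. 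It can be repaired with an intermediate scale -- for $y,z\in B^c_{r'}$ either the segment $[y,z]$ stays in $B^c_{\sqrt{r'}}$, or $|y-z|\gtrsim 2r'$ and a detour around $\partial B_{\sqrt{r'}}$ adds only $O(\sqrt{r'})=o(|y-z|)$ of length, with slopes along either path bounded by $S_{\infty}+o(1)$ -- but this must be made explicit, together with the standard fact that local slopes integrate along rectifiable curves. The paper's single annulus comparison avoids all of this bookkeeping.
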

\begin{proof}
We first prove $S_{\infty}\leq\lim\limits_{r\rightarrow\infty}Lip(u,B^c_r)$. Suppose $\lim\limits_{r\rightarrow \infty}Lip(u,B^c_r)=L<+\infty$. Given $\epsilon>0$, there exists $R_{\epsilon}>0$ satisfying $Lip(u,B^c_r)
\leq L+\epsilon$ for $r\geq R_{\epsilon}$. Choose a point $x_0\in \partial B_{R_{\epsilon}}$, then
\[\frac{|u(x)|}{|x|}\leq\frac{|u(x)-u(x_0)|}{|x-x_0|}\frac{|x-x_0|}{|x|}+\frac{|u(x_0)|}{|x|}.\]
Letting $x\rightarrow \infty$, it follows that
\[S_{\infty}=\limsup\limits_{x\rightarrow \infty}\frac{|u(x)|}{|x|}\leq L+\epsilon.\]

Now we prove $\lim\limits_{r\rightarrow\infty}Lip(u,B^c_r)\leq S_{\infty}$. Given $\epsilon>0$, there exists $R_{\epsilon}>0$ satisfying $$\frac{S_{\infty}R+m^+-m^-}{R-1}\leq S_{\infty}+\epsilon\ \ \mbox{for}\ R\geq R_{\epsilon}.$$  We will show that $Lip(u,B^c_{R_{\epsilon}})\leq S_{\infty}+\epsilon$. Given any two point $y,z\in B^c_{R_{\epsilon}}$, there exists sufficiently large $R>\max(|y|,|z|)$ satisfying $$\frac{|u(x)-u(y)|}{|x-y|}\leq \frac{|u(x)|}{|x|}\frac{|x|}{|x-y|}+\frac{|u(y)|}{|x-y|} \leq S_{\infty}+\epsilon$$ for all $x\in \partial B_R$. On the other hand, $$\frac{|u(x)-u(y)|}{|x-y|}\leq \frac{S_{\infty}|y|+m^+-m^-}{|y|-1} \leq S_{\infty}+\epsilon$$ for all $x\in \partial B_1$. That is to say, $$|u(x)-u(y)|\leq (S_{\infty}+\epsilon)|x-y|$$ for all $x\in \partial (B_R\backslash B_1)$. By \textit{CCP}, we have $$|u(x)-u(y)|\leq (S_{\infty}+\epsilon)|x-y|$$ for all $x\in B_R\backslash B_1$. Especially, $$|u(z)-u(y)|\leq (S_{\infty}+\epsilon)|z-y|.$$
\end{proof}

The following theorem is the main technical result in [SWY08] (Theorem 1.3), which will be used in our proofs of (ii) and (iii) of Theorem 1.

\begin{thm}
Suppose $w\in C(\mathbb{R}^n)$ satisfies the following:

(\romannumeral 1) $Lip(w, \mathbb{R}^n)=1;$

(\romannumeral 2) for some $M\in \mathbb{R}$ and $\epsilon>0$, \[w(x)\leq M+(1-\epsilon)|x|\ \text{for all} \ x\in\mathbb{R}^n;\]

(\romannumeral 3) $w$ is an infinity harmonic function in $\mathbb{R}^n\backslash \{0\}.$

\noindent Then
\[w(x)=w(0)-|x|.\]
\end{thm}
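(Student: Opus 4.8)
The plan is to establish the two–sided cone identity by observing that one inequality is free and then forcing the matching one. Since $\mathrm{Lip}(w,\mathbb R^n)=1$, taking $y=0$ in the Lipschitz estimate gives $w(0)-w(x)\le|x|$, i.e.
\[
w(x)\ge w(0)-|x|\qquad\text{for all }x .
\]
Hence it suffices to prove the reverse inequality $w(x)\le w(0)-|x|$, i.e. that $w$ descends at the maximal admissible rate $1$ along every ray from the origin. Throughout I would work with the origin–centred cone slopes
\[
S^+(r):=\max_{\partial B_r}\frac{w-w(0)}{r},\qquad S^-(r):=\max_{\partial B_r}\frac{w(0)-w}{r},
\]
which are nondecreasing in $r$ by \textit{CCP} with vertex at $0$; set $S^\pm_\infty:=\lim_{r\to\infty}S^\pm(r)$. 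Hypothesis (ii) yields $S^+_\infty\le 1-\epsilon<1$, the global Lipschitz bound gives $S^-_\infty\le1$, and together they produce the sandwich $w(0)-|x|\le w(x)\le w(0)+(1-\epsilon)|x|$.

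\textbf{Step 1 (the steep slope must point inward).} The first genuine step is to show $S^-_\infty=1$. Because $w$ is infinity harmonic on the exterior of the single point $\{0\}$, Proposition 1 applies and gives $\lim_{r\to\infty}\mathrm{Lip}(w,B^c_r)=\max(S^+_\infty,S^-_\infty)$. I would upgrade this to the \emph{global} Lipschitz constant using the comparison–with–cones fact that, for an infinity harmonic function, $\mathrm{Lip}(w,B_\rho(y))=\max\bigl(S^+(y,\rho),S^-(y,\rho)\bigr)$ with these local slopes nondecreasing in $\rho$: the Lipschitz constant is realised at the largest available scale, and near the isolated singularity the admissible radii shrink, so no excess Lipschitz energy can concentrate there. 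Thus $\mathrm{Lip}(w,\mathbb R^n)=\max(S^+_\infty,S^-_\infty)=1$, and since $S^+_\infty\le1-\epsilon$ this forces $S^-_\infty=1$.

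\textbf{Step 2 (blow–down to the downward cone).} Next I would pass to the blow–down $w^R(x):=\tfrac1R\bigl(w(Rx)-w(0)\bigr)$. Along a subsequence $w^R\to W$ locally uniformly, where $W$ is infinity harmonic on $\mathbb R^n\setminus\{0\}$, is $1$–Lipschitz, satisfies $W(x)\le(1-\epsilon)|x|$ by (ii), and — crucially — has \emph{constant} slopes $S^\pm_W(r)\equiv S^\pm_\infty$ for all $r$, since $S^-_{w^R}(r)=S^-(Rr)\to S^-_\infty$ and likewise for $S^+$. A homogeneous limit with constant slopes and $S^-_\infty=1>1-\epsilon\ge S^+_\infty$ cannot be a plane (which would have equal up and down slopes), so by the standard classification of blow–downs as cones it must be exactly $W(x)=-|x|$. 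Uniqueness of the blow–down then yields the asymptotic profile $w(x)=w(0)-|x|+o(|x|)$.

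\textbf{Step 3 (the crux: from asymptotic to exact).} The main obstacle is upgrading the $o(|x|)$ error to genuine equality; neither the Lipschitz bound nor the blow–down controls the \emph{rate}, and attempts to slide an upper cone $w(x)\le w(y)+|x-y|$ in from a far anchor $y$ produce an error of exactly the unknown size $(1-S^-(|y|))|y|$. To resolve this I would set $\phi:=w(0)-w$, which (as $-w$ is infinity harmonic) is infinity harmonic on $\mathbb R^n\setminus\{0\}$ with $\phi\ge0$, $\phi(0)=0$, $\phi\le|x|$, and asymptotic upward slope $1$, and reduce the theorem to the rigidity statement $\phi\equiv|x|$. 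The natural attack is on the nonnegative gap $g:=|x|-\phi$, which vanishes at the vertex $0$ and, along the steepest direction, at infinity: one expects a strong–comparison/equality case of \textit{CCP} (the geometric picture being that steepest–descent rays of a $1$–Lipschitz, maximally descending function must focus at the singularity) to force $g\equiv0$. I regard proving this finite–scale rigidity as the hard part of the argument. I note in passing that, under the licence to use earlier results, applying Theorem 1(ii) to the exterior function $\phi$ (whose boundary $\partial\Omega=\{0\}$ carries the single value $\phi(0)=0$) would instantly give that $\phi(x)-|x|$ attains its extrema at $0$, hence $\phi\equiv|x|$; this shortcut is logically circular here, however, since Theorem 1(ii) is itself deduced from the present statement, so a self–contained proof must supply the rigidity of Step 3 directly.
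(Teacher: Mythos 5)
This statement is Theorem 3 of the paper, and the paper itself offers no proof of it: it is imported verbatim as the main technical result of [SWY08] (Theorem 1.3), alongside the other quoted results Theorems 4 and 5. So the relevant comparison is with Savin--Wang--Yu's original argument, which is a genuinely hard finite-scale rigidity proof. Your proposal never reaches that content. Steps 1--2, even granted in full, yield at best the asymptotic statement $w(x)=w(0)-|x|+o(|x|)$, and Step 3 --- upgrading $o(|x|)$ to exact equality --- is the entire theorem; you explicitly leave it unproven (``I regard proving this finite-scale rigidity as the hard part''). That honesty is commendable, but the hole sits exactly where the difficulty lives. The paper's own counterexample section shows how delicate the dividing line is: the SWY function $U$ is $1$-Lipschitz, infinity harmonic off the origin, and satisfies $U(te_n)=t$ for all $t\leq 0$, so it agrees with a cone of slope $-1$ along an entire ray from the singularity, yet it is not a cone; only the one-sided growth bound (ii) excludes such behavior, so any correct argument must exploit (ii) quantitatively at finite scales, not merely through blow-down slopes. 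You correctly diagnose that invoking Theorem 1(ii) would be circular, since the paper deduces Theorem 1(ii) \emph{from} this statement.

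The completed steps also have soft spots. In Step 1, the identity $\mathrm{Lip}(w,\mathbb{R}^n)=\max(S^+_\infty,S^-_\infty)$ is asserted with a hand-wave (``no excess Lipschitz energy can concentrate'' at the singularity); since $w$ is not infinity harmonic across $0$, cones with vertex $y\neq 0$ cannot be compared over regions containing the origin, so controlling the Lipschitz constant near $0$ by the asymptotic slopes requires a real argument --- essentially the isolated-singularity asymptotics that are themselves a main theorem of [SWY08]. In Step 2, the claim that the blow-down $W$ ``must be exactly $W(x)=-|x|$ by the standard classification'' is circular: $W$ satisfies precisely hypotheses (i)--(iii) of the present theorem ($1$-Lipschitz because $S^-_W\equiv 1$, $W(x)\leq(1-\epsilon)|x|$, infinity harmonic off $0$), so classifying it as the downward cone \emph{is} the theorem, applied to $W$. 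What constant slopes actually buy, via the alignment argument the paper uses in proving Theorem 1(iv), is a single ray with $W(te)=-t$ for $t\geq 0$ --- a half-line, not the full line that Lemma 1 requires, and as the function $U$ shows, a half-line of conical behavior alone does not force the cone. Likewise ``uniqueness of the blow-down'' is stated rather than proven, and the paper itself flags that uniqueness as the challenging step in its own Theorem 1(iv). In short: your skeleton is fine for the soft asymptotic part, but the rigidity constituting the statement is missing, and completing it would mean reproducing the [SWY08] argument that the paper deliberately cites rather than reproves.
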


The following theorem \footnote{After the online publication of [HZ18], we learned that this result has already appeared in [MWZ16].} is from [MWZ16] (Theorem 1.1) and [HZ18] (Theorem 2), which will be used in the proof of (iv) of Theorem 1.

\begin{thm}
Let $w$ be an IHF in $\mathbb{R}^n$ with $Lip(w, \mathbb{R}^n)<+\infty$. Then there exists $a\in \mathbb{R}^n$ with $|a|=Lip(w, \mathbb{R}^n)$ such that
\begin{eqnarray*}
 w(x)=a\cdot x +o(|x|) \ \ \mbox{as}\ x\rightarrow\infty.
\end{eqnarray*}
\end{thm}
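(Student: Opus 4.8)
The plan is to show that every blow-down of $w$ is a linear function of full slope, and that this linear function does not depend on the blow-down sequence. Write $L:=Lip(w,\mathbb{R}^n)$; if $L=0$ then $w$ is constant and $a=0$ works, so assume $L>0$. First I would record the asymptotic slopes $S^+_\infty:=\lim\limits_{r\to\infty}\frac{\max\limits_{\partial B_r}(w-w(0))}{r}$ and its analogue $S^-_\infty$, which exist and are bounded by $L$ because the corresponding quotients are nondecreasing by \textit{CCP}. The key first claim is that $S^+_\infty=S^-_\infty=L$. Indeed, if $S^+_\infty=\alpha<L$, then monotonicity gives $w(x)\le w(0)+\alpha|x|$ for all $x$; rescaling $\hat w:=w/L$ produces a function with $Lip(\hat w,\mathbb{R}^n)=1$ satisfying $\hat w(x)\le \hat w(0)+(1-\epsilon)|x|$ with $\epsilon=1-\alpha/L>0$. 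Since $\hat w$ is infinity harmonic on all of $\mathbb{R}^n$, in particular on $\mathbb{R}^n\backslash\{0\}$, Theorem 3 forces $\hat w(x)=\hat w(0)-|x|$, a downward cone, which is not infinity harmonic at the origin, contradicting that $w$ is an entire IHF. Hence $S^+_\infty=L$, and applying the same argument to $-w$ gives $S^-_\infty=L$.

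Next I would perform the blow-down. Set $w_R(x):=\frac{w(Rx)-w(0)}{R}$; these functions are $L$-Lipschitz with $w_R(0)=0$, so Arzel\`a--Ascoli yields subsequential locally uniform limits $w_\infty$, each an entire IHF by the stability of viscosity solutions, with $Lip(w_\infty,\mathbb{R}^n)\le L$ and $w_\infty(0)=0$. Because $\frac{\max\limits_{\partial B_r}w_R}{r}=S^+_{Rr}(w)\to L$ and likewise for the minimum, the limit satisfies $\max\limits_{\partial B_r}w_\infty=Lr$ and $\min\limits_{\partial B_r}w_\infty=-Lr$ for every $r>0$. A segment-plus-compactness argument then produces a unit vector $e$ with $w_\infty(te)=Lt$ for all $t\ge 0$: on each radial segment to a sphere-maximizer one gets $w_\infty=L|\cdot|$ by the Lipschitz estimate, and the maximizing directions subconverge to $e$. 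Lipschitzness upgrades this to $w_\infty(x)\ge L(e\cdot x)$ after letting the footpoint tend to infinity, while the minimum direction analogously furnishes a unit $e'$ with $w_\infty(x)\le -L(e'\cdot x)$. Testing the two bounds at $x=e$ forces $e'=-e$, whereupon they collapse to $w_\infty(x)=L(e\cdot x)$, a plane of slope $L$. Thus every blow-down limit is linear with $|a|=|Le|=L$.

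The remaining task is to prove that the direction $e$, equivalently $a=Le$, is independent of the chosen subsequence; this is exactly what turns the subsequential statement into $w(x)=a\cdot x+o(|x|)$. I expect this uniqueness of the blow-down to be the main obstacle. Following the blow-up uniqueness argument of [ES11], carried out for entire IHFs with linear growth in [HZ18] and [MWZ16], one shows that the asymptotic maximal direction cannot split, exploiting the monotonicity supplied by \textit{CCP} along the full-slope segments. By contrast, the linearity of the individual blow-downs established above is comparatively routine once Theorem 3 is available, so the whole difficulty of Theorem 4 concentrates in this final uniqueness step.
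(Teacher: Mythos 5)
The first thing to note is that the paper does not prove this statement at all: Theorem 4 is imported from [MWZ16] (Theorem 1.1) and [HZ18] (Theorem 2), and the present paper only uses it (in the proof of Theorem 1(iv)). So the comparison is really against the proof in those references, whose strategy the introduction of this paper summarizes: blow-downs are linear by the now-standard method, and their uniqueness is the genuinely hard step, handled in [HZ18] by adapting the differentiability argument of [ES11]. Your proposal reconstructs exactly this route, and the portions you execute are correct: the monotone slope quantities; the pinning $S^{\pm}_{\infty}=L$ via Theorem 3 (if $S^{+}_{\infty}<L$, the normalized function satisfies the hypotheses of Theorem 3, hence equals a downward cone, which fails the viscosity subsolution test at its vertex --- a valid contradiction, and a clean way to force full slope); the Arzel\`a--Ascoli blow-down; the segment argument giving $w_{\infty}(te)=Lt$; and the two-sided squeeze $L(e\cdot x)\leq w_{\infty}(x)\leq -L(e'\cdot x)$ forcing $e'=-e$ and hence $w_{\infty}(x)=L(e\cdot x)$. (Your squeeze replaces the role the paper's Lemma 1 plays in the proof of Theorem 1(iv); both work.)

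However, as a self-contained proof your proposal has a genuine gap at precisely the point you yourself flag: the uniqueness of the blow-down direction is asserted with a one-line gesture at ``monotonicity supplied by \textit{CCP} along the full-slope segments,'' but not proved. This is not a routine step --- it is the entire content of the theorem beyond subsequential linearity, and in [ES11]/[HZ18]/[MWZ16] it occupies the bulk of the work (a quantitative comparison of near-maximal slope directions across scales; \textit{CCP} monotonicity along rays alone is a priori consistent with different subsequences producing different planes). Note also that the device this paper uses to get uniqueness of blow-downs in the exterior setting --- trapping an entire IHF below or above $u$ and invoking Theorem 4 --- is unavailable to you, since here it would be circular. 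So your attempt matches the paper's treatment in the honest sense that both defer the crux to [MWZ16]/[HZ18], and everything you actually argue is right; but as a proof of Theorem 4 itself, the decisive step is missing.
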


The following theorem is from [CGW07] (Theorem 3.2), which will be used in the proof of the uniqueness part of (i) (in the case of $\lambda=0$) of Theorem 2.
\begin{thm}
Let $U$ be unbounded and $\partial U$ be bounded. Let $u,v\in C(\overline{U})$, and $\Delta_{\infty}u\geq 0,\ \Delta_{\infty}v\leq 0$ in $U$. Assume also that
\[\limsup\limits_{x\rightarrow\infty}\frac{u(x)}{|x|}\leq 0\ \text{and}\ \liminf\limits_{x\rightarrow\infty}\frac{v(x)}{|x|}\geq 0,
\]
Then
\[u(x)-v(x)\leq \max\limits_{\partial U}(u-v)\ \text{for}\ x\in U.\]
\end{thm}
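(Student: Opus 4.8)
The plan is to reduce this unbounded comparison to the classical comparison principle on bounded domains by a doubling-of-variables argument carrying a penalization at infinity, the penalty being licensed precisely by the two growth hypotheses.

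First I would record a one-sided global bound by comparison with cones. For a point $y_0$ in the interior of the bounded complement $\mathbb{R}^n\setminus U$ and any $\epsilon>0$, the cone $c(x)=\max_{\partial U}u+\epsilon|x-y_0|$ dominates $u$ on $\partial U$ (since $\epsilon|x-y_0|\ge 0$), while $\limsup_{x\to\infty}u(x)/|x|\le 0$ forces $u(x)\le\tfrac{\epsilon}{2}|x|\le c(x)$ on $\partial B_R$ once $R$ is large. Hence $u\le c$ on $\partial(U\cap B_R)$, so $u\le c$ inside by comparison with cones, and letting $R\to\infty$ then $\epsilon\to0$ gives $u\le\max_{\partial U}u$ in $U$; dually $v\ge\min_{\partial U}v$. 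In particular $u-v$ is bounded above, which keeps the maximization below under control.

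Next, set $K:=\max_{\partial U}(u-v)$ and argue by contradiction: suppose $u(x^*)-v(x^*)>K$ for some $x^*\in U$, and put $2\beta:=u(x^*)-v(x^*)-K>0$. For $\delta,\gamma>0$ consider
\[
\Phi_{\delta,\gamma}(x,y):=u(x)-v(y)-\frac{|x-y|^2}{2\delta}-\gamma\big(\langle x\rangle+\langle y\rangle\big),\qquad \langle x\rangle:=\sqrt{1+|x|^2},
\]
over $\overline{U}\times\overline{U}$. The growth hypotheses are exactly what makes the penalty effective: since $\limsup u(x)/|x|\le 0$ we have $u(x)-\gamma\langle x\rangle\to-\infty$, and since $\liminf v(y)/|y|\ge 0$ we have $-v(y)-\gamma\langle y\rangle\to-\infty$, so $\Phi_{\delta,\gamma}$ attains its supremum at a finite pair $(x_{\delta,\gamma},y_{\delta,\gamma})$. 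Evaluating at $(x^*,x^*)$ shows $\sup\Phi_{\delta,\gamma}\ge K+\beta$ for $\gamma$ small, so the standard penalization estimates ($|x_{\delta,\gamma}-y_{\delta,\gamma}|^2/\delta\to0$ and convergence of the maximizers to a common point $\bar x$ with $u(\bar x)-v(\bar x)\ge K+\beta$) place $\bar x$, and hence the maximizers for small $\delta,\gamma$, in the interior of $U$.

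At an interior maximizer I would invoke the classical machinery for $\Delta_\infty$ on bounded domains: by the theorem on sums there are symmetric matrices $X\le Y$ admissible for $u$ and $v$ at the slope $p_{\delta,\gamma}=(x_{\delta,\gamma}-y_{\delta,\gamma})/\delta$ plus the small gradient of the penalty, and the sub/supersolution inequalities $\langle Xp_{\delta,\gamma},p_{\delta,\gamma}\rangle\ge 0$, $\langle Yp_{\delta,\gamma},p_{\delta,\gamma}\rangle\le 0$ contradict $X\le Y$ whenever $p_{\delta,\gamma}\neq 0$; letting $\delta,\gamma\to0$ and tracking the strictly positive gap forced by the penalty yields the contradiction. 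I expect the genuine obstacle to be, as always for the infinity Laplacian, the degeneracy at vanishing gradient (the case $p_{\delta,\gamma}=0$), where one must fall back on the full viscosity formulation, exactly as in Jensen's proof of comparison on bounded domains. This is the only delicate point; the remainder is the routine penalized doubling, with the growth hypotheses entering solely to confine the maximizers to a bounded region.
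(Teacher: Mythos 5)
First, a framing remark: the paper does not prove this statement at all --- it is Theorem 5, quoted verbatim from [CGW07] (Theorem 3.2) and used as a black box for the uniqueness part of Theorem 2(i) --- so your attempt can only be judged on its own merits. On those merits, your opening step is essentially sound: comparison with cones from above/below does yield $u\le\max_{\partial U}u$ and $v\ge\min_{\partial U}v$ in $U$ (though you should place the cone vertex at a point of $\partial U$ rather than ``in the interior of the bounded complement,'' which may be empty --- e.g.\ $U=\mathbb{R}^n\setminus\{0\}$, the very setting the paper uses elsewhere), and your penalization $\gamma\langle x\rangle$ correctly confines the doubled maximizers to a compact set and, via the value bound $\ge K+\beta$, away from $\partial U$.

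The fatal gap is the final step. You assert that at an interior doubling point the inequalities $\langle Xp,p\rangle\ge 0$ and $\langle Yp,p\rangle\le 0$ ``contradict $X\le Y$ whenever $p\neq 0$.'' They do not: $X\le Y$ gives $\langle Xp,p\rangle\le\langle Yp,p\rangle$, which points the \emph{same} way, so the three conditions are simultaneously satisfiable (e.g.\ $X=Y=0$); all one can conclude is $\langle Xp,p\rangle=\langle Yp,p\rangle=0$. Since $\Delta_\infty$ has no zeroth-order term (no strict monotonicity in $u$) and no strict ellipticity in any direction, the penalized doubling produces no strict inequality anywhere --- the obstruction is not confined to the degenerate case $p=0$, which you flag as ``the only delicate point.'' A sharp sanity check: your growth hypotheses enter only to confine the maximizers, so if your scheme closed it would prove Jensen's bounded-domain comparison theorem in half a page, whereas every known proof of that theorem (Jensen's approximation by solutions of equations of the type $\min(|Du|-\epsilon,\Delta_\infty u)=0$, Barles--Busca's change of unknown exploiting the absence of zeroth-order terms, Armstrong--Smart's finite-difference argument via comparison with cones) requires a genuinely different mechanism. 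Nor is the statement a soft corollary of bounded-domain comparison plus your first step: exhausting by $U\cap B_R$ controls $u-v$ on $\partial B_R$ only by $\epsilon R+C$, which does not pass to the limit, and overcoming exactly this is the content of [CGW07]. (A further, lesser issue: with the penalty included, the theorem on sums gives $X\le Y$ only up to $O(\gamma)$ corrections, which your sketch does not track.) As it stands, your proposal establishes the localization at infinity but not the comparison itself.
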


The following lemma is well known and very frequently used in the study of IHFs (see, for example, [C08]).
\begin{lemma}
\label{l1}Suppose $w\in C(\mathbb{R}^n)$ satisfies:

 (\romannumeral 1) $\text{Lip}(w,\mathbb{R}^n)\leq 1$;

 (\romannumeral 2) There is a unit vector $e\in \mathbb{R}^n$ such that
 \[w(te)=t,\ \forall  t\in \mathbb{R}.\]
 Then $w(x)=e\cdot x$ for all $x\in \mathbb{R}^n$.
 \end{lemma}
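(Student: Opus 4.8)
The plan is to determine $w(x)$ at an arbitrary point $x\in\mathbb{R}^n$ by squeezing it between $e\cdot x$ from below and above, using only the Lipschitz bound (i) tested against the known values $w(te)=t$ along the line and then letting $t\to\pm\infty$. The whole argument rests on the sharp use of the unit Lipschitz constant.

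First I would record the elementary asymptotics of the distance from $x$ to the point $te$. Decomposing $x=(e\cdot x)\,e+x^{\perp}$ with $x^{\perp}\perp e$, one has
\[
|x-te|=\sqrt{(t-e\cdot x)^2+|x^{\perp}|^2},
\]
so that $|x-te|=t-e\cdot x+o(1)$ as $t\to+\infty$ and $|x-te|=-t+e\cdot x+o(1)$ as $t\to-\infty$. This is the only computation needed, and it follows at once from $\sqrt{(t-e\cdot x)^2+|x^{\perp}|^2}-|t-e\cdot x|\to 0$.

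Next I would combine hypotheses (i) and (ii): since $|w(x)-t|=|w(x)-w(te)|\le|x-te|$ for every $t\in\mathbb{R}$, we obtain
\[
t-|x-te|\ \le\ w(x)\ \le\ t+|x-te|.
\]
Sending $t\to+\infty$ in the left-hand inequality and inserting the asymptotics gives $t-|x-te|=e\cdot x+o(1)$, hence $w(x)\ge e\cdot x$. Sending $t\to-\infty$ in the right-hand inequality gives $t+|x-te|=e\cdot x+o(1)$, hence $w(x)\le e\cdot x$. Therefore $w(x)=e\cdot x$, and as $x$ was arbitrary the lemma follows.

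There is no serious obstacle in this argument; the only point requiring attention is that the lower and upper bounds arise from the two opposite ends of the line, $t\to+\infty$ and $t\to-\infty$ respectively, so both halves of the hypothesis $w(te)=t$ for all $t\in\mathbb{R}$ are genuinely used. Conceptually this is precisely the rigidity fact that a $1$-Lipschitz function on $\mathbb{R}^n$ can achieve unit growth rate along an entire line only by coinciding with the associated affine function $e\cdot x$.
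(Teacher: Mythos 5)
Your proof is correct and complete. The paper itself gives no proof of this lemma (it is cited as well known, with a pointer to [C08]), and your squeeze argument --- bounding $w(x)$ via $t-|x-te|\le w(x)\le t+|x-te|$ and using the asymptotics $|x-te|=|t-e\cdot x|+o(1)$ as $t\to\pm\infty$ --- is precisely the standard argument for this rigidity fact, with both ends of the line correctly supplying the lower and upper bounds respectively.
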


\section{Proof of Theorem 1}
\begin{proof}[Proof of Theorem 1]
(i) is obvious. (ii) and (iii) are symmetric. So we only need to prove (ii) and (iv).

We first prove (ii). For simplicity, we assume $S^+_{\infty}=1$ and $S^-_{\infty}=\lambda\in [0,1)$. For any sequence $1<r_k\rightarrow +\infty$, define $v_k(x):=\frac{u(r_k x)}{r_k}$. By Proposition 1, on any compact set $K\subset \mathbb{R}^n\backslash\{0\}$, $v_k(x)$ are uniformly bounded and equi-continuous. Hence (up to a subsequence) $$v_k(x)\rightarrow V(x)\ \ \mbox{locally uniformly in}\ \mathbb{R}^n\backslash\{0\}.$$ It is easy to see that $V(0)=0$, $Lip(V, \mathbb{R}^n)\leq 1$ (by Proposition 1), $V(x)\geq -\lambda|x|$ and $V$ is an IHF in $\mathbb{R}^n\backslash\{0\}$. For each $k$, there is $e_k\in\partial B_1$ satisfying $u(r_k e_k)=\max\limits_{\partial B_{r_k}}u$ and hence $$\frac{u(r_k e_k)-m^+}{r_k}=S^+_{r_k}\rightarrow 1.$$ Up to a subsequence, $e_k\rightarrow e$. So $V(e)=1$ and hence $Lip(V, \mathbb{R}^n)=1$. From Theorem 3, $V(x)=|x|$.

Denote $\max\limits_{\partial\Omega}(u(x)-|x|)=c^+$ and $\min\limits_{\partial\Omega}(u(x)-|x|)=c^-$. Then $$c^- +|x|\leq u(x)\leq c^+ +|x| \ \ \mbox{on}\  \partial\Omega.$$ For any $\epsilon>0$, there is $\bar{k}$ such that $$c^-+(1-\epsilon)|x|\leq u(x)\leq c^++(1+\epsilon)|x| \ \  \mbox{on} \ \partial B_{r_k}$$ for all $k\geq \bar{k}$. By \textit{CCP}, $$c^-+(1-\epsilon)|x|\leq u(x)\leq c^++(1+\epsilon)|x|\ \ \mbox {in}\ \Omega.$$ Letting $\epsilon\rightarrow 0$, we have $$c^-+|x|\leq u(x)\leq c^++|x| \ \ \mbox {in}\ \Omega.$$

Now we prove (iv). For simplicity, we assume $S^-_{\infty}=S^+_{\infty}=1$. For any sequence $1<r_k\rightarrow +\infty$, define $v_k(x):=\frac{u(r_k x)}{r_k}$. We still have (up to a subsequence) $$v_k(x)\rightarrow V(x)\ \ \mbox{locally uniformly in}\ \mathbb{R}^n\backslash\{0\}.$$ It can also be verified that $V(0)=0$, $Lip(V, \mathbb{R}^n)\leq 1$ and $V$ is an IHF in $\mathbb{R}^n\backslash\{0\}$. Fix a $R>1$. For each $k$, there are $e_k^+, e_k^- \in\partial B_1$ such that $u(r_k R e_k^+)=\max\limits_{\partial B_{r_k R}}u$ and $u(r_k R e_k^-)=\min\limits_{\partial B_{r_k R}}u$. Hence $$\frac{u(r_k R e_k^+)-m^+}{r_k R}=S^+_{r_k R}\rightarrow 1$$ and $$\frac{u(r_k R e_k^-)-m^-}{r_k R}=-S^-_{r_k R}\rightarrow -1.$$ Up to a subsequence, $e_k^+\rightarrow e^+_{[R]}$ and $e_k^-\rightarrow e^-_{[R]}$. So $V(R e^+_{[R]})=R$ and $V(R e^-_{[R]})=-R$. Since $Lip(V, \mathbb{R}^n)\leq 1$, we have $$2R=V(R e^+_{[R]})-V(R e^-_{[R]})\leq |R e^+_{[R]}-R e^-_{[R]}|\leq 2R.$$ This implies $-e^-_{[R]}=e^+_{[R]}:=e_{[R]}$ and we have $V(te_{[R]})=t$ for $t\in [-R,R]$. In fact, we have this for any $R>1$. Choose another $\tilde{R}>1$, then $$\tilde{R}+R=V(\tilde{R} e_{[\tilde{R}]})-V(-R e_{[R]})\leq |\tilde{R} e_{[\tilde{R}]}-(-R e_{[R]})|\leq \tilde{R}+R.$$ This implies $e_{[\tilde{R}]}=e_{[R]}$. That is the vector $e_{[R]}$ is independent of $R$. We denote this vector as $e$ and we have $V(te)=t$ for $t\in (-\infty, +\infty)$.
By Lemma 1, $V(x)=e\cdot x$.

We have showed that the blow downs of $u$ are linear functions with slope 1. In order to get the conclusion of (iv) we have to show that the blow downs are unique.

For $k=2,3,\cdots$, let $w_k$ be the IHFs in $B_k$ satisfying $w_k=u$ on $\partial B_k$. For each $k$, either $\max\limits_{\partial \Omega}(w_k-u)\geq 0$ or $\min\limits_{\partial \Omega}(w_k-u)\leq 0$. So either $\max\limits_{\partial \Omega}(w_k-u)\geq 0$ or $\min\limits_{\partial \Omega}(w_k-u)\leq 0$ happens for infinitely many $k$. Let's assume the first case (the second case can also give the final conclusion in a similar way) and denote these $k$ as $k_j$. Define $$\tilde{w}_{k_j}:=w_{k_j}-\max\limits_{\partial \Omega}(w_{k_j}-u),$$ then $\tilde{w}_{k_j}\leq u$ on $\partial (B_{k_j}\cap \Omega)$ (implying $\tilde{w}_{k_j}\leq u$ in $B_{k_j}\cap \Omega$ by comparison principle) and $\tilde{w}_{k_j}(y_{k_j})= u(y_{k_j})$ for some point $y_{k_j}\in\partial \Omega$.
Note that $$Lip(\tilde{w}_{k_j}, B_{k_j})=Lip(\tilde{w}_{k_j}, \partial B_{k_j})=Lip(u, \partial B_{k_j})\leq Lip(u, B^c_{k_j})\rightarrow 1.$$ So we have $|\tilde{w}_{k_j}(0)|\leq \max(|m^+|,|m^-|)+2$ for all large $k_j$. By Ascoli-Arzela's theorem, we have (up to a subsequence) $$\tilde{w}_{k_j}\rightarrow W\ \ \mbox{in}\ \mathbb{R}^n\ \ \mbox{locally uniformly}.$$
Here $W$ is an IHF in $\mathbb{R}^n$ satisfying $Lip(W, \mathbb{R}^n)\leq 1$ and $W\leq u$ in $\Omega$. By Theorem 4,
\begin{eqnarray*}
 W(x)=a\cdot x +o(|x|) \ \ \mbox{as}\ x\rightarrow\infty
\end{eqnarray*}
for some $a\in \mathbb{R}^n$ with $|a|=Lip(W, \mathbb{R}^n)$.

The fact $u\geq W$ in $\Omega$ implies that any blow down of $u$ $$V(x)=e\cdot x\geq a\cdot x.$$ So $e=a$ (implying $|a|=1$) and $V(x)=a\cdot x$.
\end{proof}

\section{Proof of Theorem 2}

\begin{proof}[Proof of Theorem 2]
We first prove (i). Let $g\in C(\partial \Omega)$ and $\lambda\in \mathbb{R}$ are given. Denote $\max\limits_{\partial\Omega}(g(x)-\lambda|x|)=c^+$ and $\min\limits_{\partial\Omega}(g(x)-\lambda|x|)=c^-$. For $k=2,3,\cdots$, let $u_k$ be the IHF in $B_k\cap \Omega$ satisfying $u_k=c^++\lambda|x|$ on $\partial B_k$ and $u_k=g$ on $\partial \Omega$. By \textit{CCP}, one can verify that on any compact set $K\subset\subset \Omega$, $Lip(u_k, K)\leq C(g,\lambda, K)$ and $\|u_k\|_{L^{\infty}(K)}\leq C(g,\lambda, K)$ for all large $k$. Therefore by Ascoli-Arzela's theorem, up to a subsequence, we have
\begin{eqnarray*}
 u_k\rightarrow u \ \ \mbox{locally uniformly in}\ \Omega.
\end{eqnarray*}
The function $u$ is an IHF in $\Omega$ satisfying $u=g$ on $\partial \Omega$. By comparison principle, we know that
\begin{eqnarray*}
c^-+\lambda|x|\leq u(x)\leq c^++\lambda|x| \ \ \mbox {in}\ \Omega.
\end{eqnarray*}

In case of $\lambda=0$, the uniqueness of solution $u$ follows from Theorem 5 directly.

The proof of (ii) is same with (i). We just need to replace $\lambda|x|$ with $a\cdot x$ in every steps of the above process.

\end{proof}

\section{Counterexamples}

In [SWY08], the authors constructed an IHF $U(x)$ in $\mathbb{R}^n\backslash\{0\}$. The function $U$ satisfies the following properties: $Lip(U, \mathbb{R}^n)=1$, $U(te_n)=t$ for $t\in (-\infty,0]$ and $U(e_n)=0$. Hence $U$ is neither linear nor a cone. We refer the readers to the original paper [SWY08] (Page 4) for the construction of $U$. Using the established results in this paper, we can get the following new fact on $U$.

\begin{prop}
$U(x)=e_n\cdot x+o(|x|)$ as $x\rightarrow \infty$.
\end{prop}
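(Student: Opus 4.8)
The plan is to recognize $U$ as an exterior infinity harmonic function to which Theorem 1 applies, albeit with the maximally degenerate boundary $\partial\Omega=\{0\}$, then to decide which of the four alternatives occurs and finally to pin down the asymptotic vector $a$. To set up, take $A=\{0\}$ and $\Omega=\mathbb{R}^n\setminus\{0\}$, so that $U\in C(\mathbb{R}^n)=C(\overline{\Omega})$ is an IHF in $\Omega$ with $Lip(U,\mathbb{R}^n)=1<+\infty$; in particular $Lip(U,B^c_r)\leq 1$ for every $r$, so Proposition 1 gives $S_{\infty}\leq 1$. Since $\partial\Omega$ is the single point $0$ and $U(te_n)\to 0$ as $t\to 0^-$, continuity forces $m^+=m^-=U(0)=0$.

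Next I would compute the growth quantities. Along the ray $x=-re_n$ with $r>0$ we have $U(-re_n)=-r$, hence $\min_{\partial B_r}U\leq -r$ and therefore $S^-_r\geq 1$; combined with $S^-_{\infty}\leq S_{\infty}\leq 1$ this forces $S^-_{\infty}=S_{\infty}=1$. This already excludes alternative (i), where $S_{\infty}=0$, and alternative (ii), which requires $S^-_{\infty}<S^+_{\infty}$ and is impossible because $S^+_{\infty}\leq S_{\infty}=1=S^-_{\infty}$.

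The step that needs care, and the main obstacle, is ruling out alternative (iii); this is the only place where the specific value $U(e_n)=0$ is used. If (iii) held, then $U(x)+S^-_{\infty}|x|=U(x)+|x|$ would attain both its maximum and its minimum on $\partial\Omega=\{0\}$, where its value is $U(0)+0=0$. This would give $0\leq U(x)+|x|\leq 0$ throughout $\Omega$, i.e. $U(x)\equiv -|x|$, contradicting $U(e_n)=0$. Hence the only remaining possibility is alternative (iv), and Theorem 1 yields $a\in\mathbb{R}^n$ with $|a|=S_{\infty}=1$ and $U(x)=a\cdot x+o(|x|)$ as $x\to\infty$.

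Finally I would determine $a$ by testing this expansion along the same ray $x=-re_n$. Since $U(-re_n)=-r$ and $a\cdot(-re_n)=-r\,a_n$, the relation $U(x)-a\cdot x=o(|x|)$ reads $-r(1-a_n)=o(r)$ as $r\to\infty$, which forces $a_n=1$. Together with $|a|=1$ this gives $a=e_n$, and therefore $U(x)=e_n\cdot x+o(|x|)$ as $x\to\infty$, which is exactly the claimed statement.
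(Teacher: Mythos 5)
Your proof is correct and follows essentially the same route as the paper: invoke Theorem 1 for $\Omega=\mathbb{R}^n\setminus\{0\}$, rule out the cone alternatives (the only compatible cone being $-|x|$, contradicted by $U(e_n)=0$), and identify the plane as $e_n\cdot x$ from the ray $U(te_n)=t$, $t\leq 0$. Your version is merely more explicit than the paper's in computing $S^-_{\infty}=S_{\infty}=1$ to exclude cases (i) and (ii), which the paper handles in one terse sentence.
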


\begin{proof}
By Theorem 1 (in this case $\Omega=\mathbb{R}^n\backslash\{0\}$), $U$ is asymptotic to a plane or a cone at infinity. If $U$ is asymptotic to a cone, this cone can only be $-|x|$ since $U(te_n)=t$ for $t\leq 0$. By (iii) of Theorem 1, $U(x)=-|x|$. This is impossible since $U(e_n)=0$. So $U$ is asymptotic to a plane and this plane has slope less than or equal to 1 since $Lip(U, \mathbb{R}^n)=1$. Thus this plane can only be $e_n\cdot x$ since $U(te_n)=t$ for $t\leq 0$.
\end{proof}

We can use this function $U$ to show that given $g$ and $a$ the IHFs satisfying $u|_{\partial \Omega}=g$ and $u(x)=a\cdot x +o(|x|)$ as $x\rightarrow\infty$ are not unique in general.

\begin{ex}
 For the exterior domain $\Omega=\mathbb{R}^n\backslash\{0\}$, given $g=0$ on $\partial \Omega=\{0\}$ and $a=e_n$, both the two functions $U(x)$ and $V(x)=V(x',x_n):=-U(x',-x_n)$ satisfy $u|_{\partial \Omega}=g$ and $u(x)=a\cdot x +o(|x|)$ as $x\rightarrow\infty$.
\end{ex}

\begin{ex}
 For the exterior domain $\Omega=\mathbb{R}^n\backslash\{0, e_n\}$, given $g=0$ on $\partial \Omega=\{0, e_n\}$ and $a=e_n$, both the two functions $U(x)$ and $\tilde{V}(x)=V(x-e_n)=-U(x',1-x_n)$ satisfy $u|_{\partial \Omega}=g$ and $u(x)=a\cdot x +o(|x|)$ as $x\rightarrow\infty$.
\end{ex}

We can also use this function $U$ to illustrate a problem. From the construction (see [SWY08] Page 4), we know $U(x)\leq e_n\cdot x$, but $U(x)\geq e_n\cdot x-1$ does not hold. That is, considering $U$ as an exterior IHF in $\Omega=\mathbb{R}^n\backslash\{0, e_n\}$, $U(x)-e_n\cdot x$ does not attain its minimum on $\partial \Omega$. This illustrates that the conclusion ``$u(x)=a\cdot x +o(|x|)$ as $x\rightarrow\infty$'' in (iv) of Theorem 1 can not be improved to that ``$u(x)-a\cdot x$ attains its maximum and minimum on $\partial \Omega$''.

Finally, the following questions are interesting for us but so far we do not know the answers.

\begin{qu}
Is $U-e_n\cdot x$ bounded below? If $U-e_n\cdot x$ is not bounded, how big can this asymptotic error term $o(|x|)$ be?
\end{qu}

\begin{qu}
Are the solutions in (i) (in the case of $\lambda\neq 0$ ) and (ii) of Theorem 2 unique?
\end{qu}

\section*{Acknowledgements}
This work is supported by the National Nature Science Foundation of China: NSFC 11301411 and 11671316. The first author would like to thank Professor Dongsheng Li for many helpful conversations and encouragement. Part of this paper was completed during the first author's visit to University of Washington (Seattle). His visit was funded by China Scholarship Council. He would also like to thank Professor Yu Yuan for the invitation and to the Department of Mathematics for warm hospitality.

\bibliographystyle{elsarticle-num}

\end{document}